\newtheorem{thm}{Theorem}
\newtheorem{lem}[thm]{Lemma}
\newtheorem{prop}[thm]{Proposition}
\theoremstyle{definition}
\theoremstyle{remark}
\newtheorem{rems}[thm]{Remarks}
\newcommand{\A}{\mathcal{A}}
\newcommand{\B}{\mathcal{B}}
\newcommand{\C}{\mathcal{C}}
\newcommand{\Z}{\mathcal{Z}}
\newcommand{\Pe}{\mathcal{P}}
\newcommand{\Pm}{\mathrm{Prim}}
\newcommand{\Gl}{\mathrm{Glimm}}
\newcommand{\mset}{\emptyset}
\newcommand{\pr}{{}^{\prime}}
\newcommand{\I}{\mathcal{I}}
\newcommand{\D}{\mathfrak{D}}
\begin{document}
\baselineskip=18pt
\title{Glimm Spaces of Separable $C^*$-algebras}%
\bigskip
\bigskip
\dedicatory{In memory of Ola Bratteli}
\bigskip
\bigskip
\author{Aldo J. Lazar}%
\address{School of Mathematical Sciences\\
         Tel Aviv University\\
         Tel Aviv 69978, Israel}%
\email{aldo@post.tau.ac.il}
\author{Douglas W. B. Somerset}
\address{Institute of Mathematics, University of Aberdeen\\
         King's College, Aberdeen AB24 3UE, Scotland, UK}
\email{dwbsomerset@googlemail.com}

\thanks{The authors wish to thank Professors A. V. Arhangel'ski\u{i} and G Gruenhage from whom they learned about the existence of topological spaces with certain properties discussed in the paper}
\subjclass{46L05}
\keywords{AF $C^*$-algebra, Glimm space}

\date{September 17, 2015}
%\commby{}%
% --------------------------------------------------------------
\begin{abstract}

We characterize the topology of the Glimm space of a separable $C^*$-algebra and extend the main result of \cite{B} to non-unital AF
$C^*$-algebras.

\end{abstract}
\maketitle

\newpage
% ------------------------------------------------------------
\section{Introduction and Preliminaries} \label{S:I}

Let $A$ be a $C^*$-algebra with multiplier algebra $M(A)$. The celebrated Dauns-Hofmann theorem states that $Z(M(A))$, the centre of $M(A)$, is
$*$-isomorphic to $C^b(\Pm(A))$, the $C^*$-algebra of continuous, bounded, complex-valued functions on $\Pm(A)$ (the primitive ideal space of
$A$ with the hull-kernel topology). Let $\phi_A : \Pm(A)\to \Gl(A)$ be the complete regularization map for $\Pm(A)$ (where $\Gl(A)$ is the
completely regular topological space with the universal property that any continuous map of $\Pm(A)$ into a completely regular space factors
uniquely through $\phi_A$). Thus $Z(M(A))$ is $*$-isomorphic to $C^b(\Gl(A))$.

If $A$ is a unital $C^*$-algebra then $A=M(A)$ and $\Gl(A)$ is compact (being the continuous image of the compact space $\Pm(A)$) and is
homeomorphic to the maximal ideal space of $Z(A)$; and hence is metrizable if $A$ is also separable. If $A$ is non-unital, however, there may be
little relation between $\Gl(A)$ and $Z(A)$. For example, if $A=C^*(G)$, where $G$ is the continuous Heisenberg group, then $Z(A)=\{0\}$ but
$\Gl(A)$ is homeomorphic to the real line ${\bf R}$, see \cite[section III]{R.-Y.}. This phenomenon, where the Glimm space is larger than the
maximal ideal space of the centre, is common with the group C$^*$-algebras of non-discrete groups as can be seen in \cite[section 5]{K} and
\cite[sections 1, 3, and 5]{Lo}. Another complication that arises in the non-unital context is that $\Gl(A)$ need not be locally compact (see,
for example \cite[III.9.2]{DH}), and hence may not be homeomorphic to the maximal ideal space of any commutative $C^*$-algebra.

If $A$ has a countable approximate identity then $\Pm(A)$ is $\sigma$-compact and hence its continuous image $\Gl(A)$ is also $\sigma$-compact,
and is thus normal and paracompact. Otherwise little is known about the possibilities for $\Gl(A)$, and the main purpose of this paper is to
show that, for $A$ separable, $\Gl(A)$ can be any quotient of a locally compact, second countable, Hausdorff space, and that all possibilities
occur as the Glimm spaces of separable AF $C^*$-algebras (Theorem \ref{T:Eq}). This extends the theorem of O. Bratelli who showed that every
separable, unital, commutative $C^*$-algebra arises as the centre of some separable unital AF $C^*$-algebra \cite{B}. In particular it follows
that ${\bf Q}$ (the space of rational numbers with the usual topology) is not the Glimm space of any separable $C^*$-algebra. On the other hand,
Theorem \ref{T:Eq}, combined with an example due to Arhangel'ski\u{i} and Franklin \cite{AF}, shows that there is a separable AF $C^*$-algebra
$A$ for which $\Gl(A)$ is nowhere locally compact. The basic idea of the construction that we use for a $C^*$-algebra having a prescribed Glimm
space is due to Bratteli \cite[p. 199]{B}; however, the proof that this $C^*$-algebra satisfies the requirements is new even for the case of a
second countable locally compact Hausdorff space that was treated in \cite{B} and \cite{HT}.

In Section \ref{S:Points} we investigate the failure of local compactness of $\Gl(A)$ in the case when $A$ is separable. We show that a point
$x\in \Gl(A)$ fails to have a compact neighbourhood precisely when it fails to have a countable neighbourhood base, and that this can be
characterized in terms of the behaviour of the map $\phi_A$ at $\phi_A^{-1}(x)$ (Theorem \ref{T:Top}). Thus if $A$ is separable, then $\Gl(A)$
decomposes into an open subset $S$, which is both locally compact and metrizable, and a closed subset $\Gl(A)\setminus S$ of exceptional points
for which both local compactness and first countability fail.

In the proof of Theorem \ref{T:Eq} we shall employ Bratteli diagrams (see \cite{Ind} and \cite{LT} for the definition of this notion) of the
kind that we describe below. They consist of a set of pairs, called also vertices, $\mathfrak{D} := \{(k,i) \mid 1\leq i\leq s(k), k\geq 1\}$
where $\{s(k)\}_{k=1}^{\infty}$ is a sequence of natural numbers and a set of edges connecting the elements of the $k$-th row $\{(k,i) \mid
1\leq i\leq s(k)\}$ with elements of the $(k+1)$-th row. If $(k,i)$ and $(k+1,j)$ are connected by an edge, one says that $(k,i)$ is an
(immediate) ancestor of $(k+1,j)$ and the latter is a descendant of the former. Each element in $\mathfrak{D}$ has at least one descendant. In
general edges can have a multiplicity but in this paper every edge has multiplicity one. The dimension of each pair that has no ancestors is
one; the dimension attached to any other pair is equal to the sum of the dimensions of all its ancestors in the previous row. More generally, we
shall say that the vertex $v_1$ is an ancestor of the vertex $v_p$ if there are vertices $\{v_t\}_{t=1}^p$ such that $v_i$ is an immediate
ancestor of $v_{i+1}$, $1\leq i<p$. Of course, in this situation too, $v_p$ is called a descendant of $v_1$. We shall say that a sequence
$\{v_p\}_{p=1}$ of vertices of the diagram, finite or infinite, is connected if $v_p$ is an ancestor of $v_{p+1}$, $p\geq 1$. A connected
sequence is called complete if $v_{p+1}$ is on the row immediately following that of $v_p$, $p\geq 1$. It is well known that every Bratteli
diagram defines an AF $C^*$-algebra. Moreover, as shown in \cite{Ind} (see also \cite{LT} for the non-unital case), there is a one-to-one
correspondence between the closed ideals of this AF algebra and subdiagrams of a certain kind which we shall call ideal subdiagrams. This
correspondence respects the order of the lattice of closed ideals and that of the subdiagrams. By a primitive ideal subdiagram we shall mean an
ideal subdiagram that corresponds to a primitive ideal. Let now $\{v_p\}_{p=1}^{\infty}$ be a complete connected sequence in $\mathfrak{D}$ with
$v_1$ on the first row of $\mathfrak{D}$. Set
$$
  \mathfrak{E} := \{v\in \mathfrak{D} \mid v\ \text{is a descendant of $v_p$ for some}\ p\geq 1\}.
$$
Then $\mathfrak{E}$ is the smallest subset of $\mathfrak{D}$ having the properties that $\{v_p\}_{p=1}^{\infty}\subseteq \mathfrak{E}$ and
$\mathfrak{D}\setminus \mathfrak{E}$ is a primitive ideal subset.

The space $\Gl(A)$ associated with a $C^*$-algebra $A$ is endowed with the weakest topology for which the elements of $C^b(\Pm(A))$ when
considered as functions on $\Gl(A)$ are continuous. For a separable $A$ this is the same as the quotient topology defined by $\phi_A$ as follows
from \cite[Theorem 2.6]{L}.

For the proof of our main result we shall need a variant of lemma from \cite{B}.

\begin{lem}[Lemma 4 of \cite{B}] \label{L:norm}

   Suppose $\A_1\subseteq \A_2$ are finite dimensional $C^*$-algebras with the unit of $\A_2$ contained in $\A_1$ and $\{e_k^j\}_{k=1}^{p_j}$, $j = 1,2$,
   are mutually orthogonal projections in
   $\Z(\A_j)$. Let $a_j = \Sigma_{k=1}^{p_j} \alpha_k^je_k^j$, $j = 1, 2$. Then $\|a_1 - a_2\| = \sup\{|\alpha_k^1 - \alpha_l^2| \mid
   e_k^1e_l^2\neq 0\}$.

\end{lem}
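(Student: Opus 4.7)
The plan is to rewrite $a_1-a_2$ as a linear combination of mutually orthogonal projections indexed by pairs $(k,l)$; once this is done, the conclusion is immediate because a linear combination $\sum_\nu \lambda_\nu f_\nu$ of mutually orthogonal projections in a $C^*$-algebra has norm $\max\{|\lambda_\nu|:f_\nu\neq 0\}$. The first, essentially free, observation is that $a_1$ and $a_2$ commute: $a_2\in \Z(\A_2)$ commutes with every element of $\A_2$, and in particular with $a_1\in \A_1\subseteq \A_2$. Hence $a_1-a_2$ lies in an abelian $C^*$-subalgebra of $\A_2$, where norms can be read off from the spectral data.

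The central step is to analyze the cross-products $e_k^1 e_l^2$. Since $e_l^2\in \Z(\A_2)$ it commutes with $e_k^1\in \A_1\subseteq \A_2$, so $e_k^1 e_l^2$ is itself a projection. For $(k,l)\neq (k',l')$, pushing $e_l^2$ past $e_{k'}^1$ by centrality and using the mutual orthogonality relations $e_k^1 e_{k'}^1=\delta_{kk'}e_k^1$ and $e_l^2 e_{l'}^2=\delta_{ll'}e_l^2$, one obtains $(e_k^1 e_l^2)(e_{k'}^1 e_{l'}^2)=0$. Thus the doubly indexed family $\{e_k^1 e_l^2\}_{k,l}$ consists of pairwise orthogonal projections in $\A_2$, some of which may of course be zero.

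Assuming, as the setup of the lemma implicitly allows, that $\sum_k e_k^1=1_{\A_1}=1_{\A_2}=\sum_l e_l^2$ (otherwise one simply adjoins the complementary central projection to each algebra with coefficient zero), one then computes
\[
a_1 = a_1\sum_l e_l^2 = \sum_{k,l}\alpha_k^1\, e_k^1 e_l^2,\qquad a_2 = a_2\sum_k e_k^1 = \sum_{k,l}\alpha_l^2\, e_k^1 e_l^2,
\]
so that $a_1-a_2=\sum_{k,l}(\alpha_k^1-\alpha_l^2)\,e_k^1 e_l^2$, and the desired norm formula drops out from the preliminary remark. The only point that actually requires care is the orthogonality of the cross-products, which hinges on the centrality of the $e_l^2$ in the \emph{larger} algebra $\A_2$; everything else reduces to bookkeeping.
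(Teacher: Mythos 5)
Your argument is correct for the statement as it is actually meant and used, and it is essentially the argument of \cite{B}: the paper itself offers no proof, only the remark that Bratteli's proof of his Lemma 4 survives dropping the hypothesis that the embedding $\A_1\subseteq \A_2$ is unital, and Bratteli's proof is precisely the decomposition against the commuting cross-products $e_k^1e_l^2$ that you carry out. Your verification that the $e_k^1e_l^2$ are mutually orthogonal projections (via centrality of $e_l^2$ in the larger algebra) and the norm evaluation for a linear combination of orthogonal projections are both sound, and you correctly isolate the one hypothesis the written statement omits, namely $\sum_k e_k^1=1_{\A_2}=\sum_l e_l^2$; this holds in \cite{B}, where the $e_k^j$ are the minimal central projections, and in the paper's application, where the relevant projections sum to the unit $f_m^i$ of the hereditary subalgebra $f_m^i\A f_m^i$. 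The one misstep is the parenthetical claim that the general case follows by adjoining the complementary central projections with coefficient zero: that manoeuvre enlarges the index set over which the supremum on the right-hand side is taken, so it proves a different (correct) formula rather than the stated one. Indeed, without the completeness assumption the stated identity is false: take $\A_1=\A_2=\mathbb{C}^2$ with $e_1^1=(1,0)$ and $e_1^2=(0,1)$, so that $e_1^1e_1^2=0$ and the supremum is over the empty set, while $a_1-a_2=(\alpha_1^1,-\alpha_1^2)$ need not vanish. This does not affect the paper, but the reduction should either be deleted or replaced by the observation that the completeness hypothesis is part of the intended statement.
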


\begin{proof}

   It is assumed everywhere in \cite{B} that an embedding of a finite dimensional $C^*$-algebra into another preserves the units but this is not
   really needed for the proof of \cite[Lemma 4]{B}; the proof there remains valid without this assumption.

\end{proof}

\section{Glimm spaces of AF algebras} \label{S:Glimm}

In this section we shall establish a characterization of the Glimm space of a separable $C^*$-algebra.

\begin{thm} \label{T:Eq}

   The following properties are equivalent for a Hausdorff topological space $X$:
   \begin{itemize}
      \item[(i)] There is an AF algebra $\A$ such that $X$ is homeomorphic to $\Gl(\A)$;
      \item[(ii)] There is a separable $C^*$ algebra $\A$ such that $X$ is homeomorphic to $\Gl(\A)$;
      \item[(iii)] $X$ is homeomorphic to the quotient of a locally compact Hausdorff second countable space;
      \item[(iv)] $X = \cup_{n=1} X_n$ where $\{X_n\}_{n=1}$ is an increasing sequence of compact metrizable subspaces such that a subset
      $F$ of $X$ is closed if and only if $F\cap X_n$ is closed for each $n$.
   \end{itemize}

\end{thm}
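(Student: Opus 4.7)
The plan is to prove $(i)\Rightarrow(ii)$ trivially, to dispatch the topological equivalences $(ii)\Rightarrow(iv)$, $(iii)\Rightarrow(iv)$, and $(iv)\Rightarrow(iii)$ by a uniform quotient-map argument, and finally to tackle the substantive construction $(iv)\Rightarrow(i)$. The common tool for the topological part is the following: a continuous surjection $f\colon Z\to W$ from a second countable compact space $Z$ onto a Hausdorff space $W$ forces $W$ to be compact metrizable, because $f$ is automatically closed, hence a quotient, and the standard closed-map trick (cover each compact fibre by finitely many basic open sets) produces a countable basis for $W$, after which Urysohn applies. For $(ii)\Rightarrow(iv)$, take a compact exhaustion $\Pm(\A)=\bigcup_n K_n$ with $K_n\subseteq \mathrm{int}(K_{n+1})$ (available because $\Pm(\A)$ is second countable and locally compact for separable $\A$), set $X_n:=\phi_\A(K_n)$, and transfer the inductive-limit topology across $\phi_\A$, using that $\phi_\A$ is a quotient map by \cite[Theorem 2.6]{L}. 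The same argument delivers $(iii)\Rightarrow(iv)$. For $(iv)\Rightarrow(iii)$, take $Y:=\bigsqcup_n X_n$ with the natural map to $X$; this is a quotient by the very definition of the topology in $(iv)$.

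The substantive step is $(iv)\Rightarrow(i)$, extending the construction of \cite[p.~199]{B}. I would fix a compatible metric on each $X_n$ and inductively choose a nested sequence of finite open covers $\mathcal{U}_n^{(k)}$ of $X_n$ whose mesh tends to zero, arranged so that the restriction to $X_n$ of $\mathcal{U}_{n+1}^{(k)}$ refines $\mathcal{U}_n^{(k)}$. Build a Bratteli diagram $\D$ whose vertices at row $k$ enumerate the members of $\mathcal{U}_n^{(k)}$ for $n\le k$, with an edge from a vertex $U$ at row $k$ to a vertex $V$ at row $k+1$ exactly when $U\cap V\neq \mset$ (the vertices appearing at level $k+1$ for the new space $X_{k+1}$ are joined using the refinement condition). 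Let $\A$ be the AF algebra associated with $\D$. For each $x\in X$, choose at every level a vertex whose open set contains $x$; the resulting complete connected sequence $\{v_p(x)\}$ yields, via the construction in the preamble, an ideal subdiagram $\D\setminus \mathfrak{E}_x$ and hence a primitive ideal $P_x\in \Pm(\A)$. The candidate homeomorphism is $x\mapsto \phi_\A(P_x)\colon X\to \Gl(\A)$.

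The main obstacle is verifying that this map is a well-defined homeomorphism. Three tasks: (a) every primitive ideal of $\A$ arises as $P_x$ for some $x\in X$; (b) $P_x$ and $P_y$ are Glimm-equivalent iff $x=y$; and (c) the quotient topology on $\Gl(\A)$ coincides with the given topology on $X$. I expect step (c) to be the hardest, and it is where Lemma~\ref{L:norm} is indispensable. Given $f\in C^b(X)$, one builds central elements $a_k\in \Z(\A_k)$ whose scalars on minimal central projections sample $f$ at representative points of the cover members in $\mathcal{U}_n^{(k)}$; Lemma~\ref{L:norm} then controls $\|a_{k+1}-a_k\|$ as a supremum of $|\alpha_i^{k+1}-\alpha_j^k|$ over incidence pairs, which is bounded by the oscillation of $f$ over cover members of vanishing mesh, so uniform continuity of $f$ on each $X_n$ produces a Cauchy sequence in $M(\A)$. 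Conversely every element of $\Z(M(\A))$ is obtained this way. Beyond Bratteli's original compact unital setting, the genuinely new work is the handling of the multiplier algebra norm across the non-compact inductive limit and the careful bookkeeping of the edges encoding the inclusions $X_n\hookrightarrow X_{n+1}$, which ensure that the $k_\omega$-topology of $X$ is faithfully transmitted to $\Gl(\A)$.
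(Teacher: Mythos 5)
Your handling of the easy implications is essentially correct, though routed differently from the paper: you prove $(ii)\Rightarrow (iv)$ directly from a compact exhaustion of $\Pm(\A)$ together with the fact that $\phi_{\A}$ is a quotient map, while the paper instead proves $(ii)\Rightarrow (iii)$ by passing to the (Hausdorff, locally compact, second countable) space of primal ideals; your closed-map argument for metrizability of the images and the disjoint-union argument for $(iv)\Rightarrow (iii)$ both work. The genuine gaps are all in $(iv)\Rightarrow (i)$, and they are not merely matters of detail. First, your diagram is Bratteli's original one (finite open covers of the $X_n$ with an edge whenever two cover members meet), and this is precisely the construction whose verification the paper found it necessary to replace. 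With symmetric ``nonempty intersection'' edges, a complete connected sequence in $\D\setminus \D^{\Pe}$ for an arbitrary primitive ideal $\Pe$ consists of open sets that may drift, with limit points lying only on boundaries of cover members; nothing in your sketch produces a point $x\in X$ with $\phi_{\A}(P_x)=\phi_{\A}(\Pe)$, so surjectivity (and with it well-definedness of the inverse) is open. The paper avoids this by working not with open covers of $X_n$ but with \emph{disjoint} clopen partitions $\{E_k^j\}$ of auxiliary totally disconnected spaces $Y_n$ (spectra of commutative AF algebras containing $C(X_n)$), and by building an \emph{asymmetric} edge relation ($E_{k+1}^h\subset E_k^i$ for some $i\leq j$, plus a fibre condition for the quotient map $q:Y\to X$). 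Disjointness makes the index sequence of containing sets non-increasing, hence eventually constant, which yields a decreasing sequence of clopen sets with singleton intersection dominating any given branch; the ordering makes the ``leftmost branch'' argument available in the proof that $h=\tilde g$. Your proposal has no substitute for either device. Relatedly, your task (a) as stated (``every primitive ideal arises as $P_x$'') is false and not what is needed; what must be shown is that every Glimm class meets the image.

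Second, your endgame does not close. Even granting that every $f\in C^b(X)$ yields a central multiplier and conversely, an isomorphism $C^b(X)\cong Z(M(\A))\cong C^b(\Gl(\A))$ only identifies the Stone--\v{C}ech compactifications of $X$ and $\Gl(\A)$; it does not give a homeomorphism (compare $C^b(\mathbb{N})\cong C^b(\beta\mathbb{N})$). The paper's mechanism is to exhibit ideals $\I_n$ with $\Pm(\I_n)$ compact, to prove that $\Gl(\A)$ carries the weak topology determined by the sets $\phi_{\A}(\Pm(\I_n))$, and to show that the candidate map restricts to a continuous bijection $X_n\to \phi_{\A}(\Pm(\I_n))$ of compact Hausdorff spaces; your remark about ``faithfully transmitting the $k_\omega$-topology'' points in this direction but supplies none of it. Finally, your claim that Lemma \ref{L:norm} makes $\{a_k\}$ norm-Cauchy in $M(\A)$ fails in the non-unital setting: at each row $k$ a fresh cover of $X_k$ enters whose mesh has not yet been refined, so the supremum over \emph{all} incidence pairs does not tend to zero. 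Only strict convergence holds, and one must estimate $\|a_ka-a_la\|$ for $a$ in a fixed $\A_m$, where the oscillation bound is uniform on the finitely many $X_p$, $p\leq m$, that are relevant; this is exactly how the paper proceeds.
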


\begin{rems}

   \begin{enumerate}

      \item A Hausdorff space satisfying (iii) is paracompact by \cite[Theorem 1]{M}. A space $X$ satisfying (iv) is (homeomorphic to)
      the quotient of the disjoint union of $\{X_n\}$, a locally compact Hausdorff second countable space, hence it is also paracompact.
      \item When $X$ is compact, the implication $(iv)\Rightarrow (i)$ is an immediate consequence of the main result of \cite{B} and the
      Banach-Stone theorem.

   \end{enumerate}

\end{rems}

\begin{proof}

   $(i)\Rightarrow (ii)$. This is obvious.

   $(ii)\Rightarrow (iii)$. Let $\A$ be a separable $C^*$-algebra. By \cite[Proposition 3.2]{AB} and \cite[Proposition 2.4]{L} $\Gl(\A)$ is homeomorphic
   to a quotient space of the space of all proper primal ideals of $\A$ endowed with the $\tau_s$ topology. By \cite[Proposition 4.1 and p.525]{A}
   the latter space is locally compact Hausdorff and second countable.

   $(iii)\Rightarrow (iv)$. Suppose $Y$ is a locally compact Hausdorff space with a countable basis and $\varphi : Y\to X$ is a quotient map.
   Let $\{Y_n\}$ be a sequence of compact subsets of $Y$ such that $Y_n\subset \mathrm{int}(Y_{n+1})$, $n\geq 1$, and $\cup Y_n = Y$. Denote $X_n
   := \varphi(Y_n)$. Then $C(X_n)$ can be isomorphically embedded into $C(Y_n)$; hence $C(X_n)$ is separable and $X_n$ is compact metrizable. Now
   suppose that $F\subset X$ satisfies: each $F\cap X_n$ is closed. We want to show that $\varphi^{-1}(F) = \cup \varphi^{-1}(F\cap X_n)$ is
   closed in the metrizable space $Y$. Let $\{y_m\}$ be a sequence in $\varphi^{-1}(F)$ that converges to $y\in Y$. Then $y\in
   \mathrm{int}(Y_p)$ for some $p\geq 2$; hence $\{y_m\}_{m\geq l}\subset \mathrm{int}(Y_p)$ for some $l$. We have $\{\varphi(y_m)\}_{m\geq
   l}\subset F\cap X_p$ and $\lim \varphi(y_m) = \varphi(y)$. Thus $\varphi(y)\in F\cap X_p$ and $y\in \varphi^{-1}(F\cap X_p)\subset
   \varphi^{-1}(F)$. We have obtained that $\varphi^{-1}(F)$ is closed and it follows that $F$ is closed in $X$.

   $(iv)\Rightarrow (i)$. We begin by choosing a sequence $\{f_m\}$ in $C^b(X)$ such that the restrictions $\{f_m\mid_{X_n}\}_{m=1}$ are dense in
   $C(X_n)$ for every $n$. This can be done by choosing for each $n$ a sequence of functions dense in $C(X_n)$ and extending them to
   all of $X$ without increasing their norms. Renumbering the union of these sequences will yield the sequence $\{f_m\}$.

   Set $X_0 := \mset$. For each $m\geq 1$ and $k\geq 1$ we decompose the sets $X_n\setminus X_{n-1}$, $n\geq 1$, into finitely many mutually
   disjoint subsets such that the oscillation
   of $f_m$ on each of these subsets is less than $2^{-k}$. The characteristic functions of these subsets of $X_n\setminus X_{n-1}$
   generate a commutative AF algebra $\B_n$ of scalar functions on $X$ that vanish off $X_n\setminus X_{n-1}$. Clearly each $f_m\mid_{X_n}$
   belongs to $\C_n := \oplus_{i=1}^n \B_i$; hence $C(X_n)$ is a subalgebra of $\C_n$ with the same unit;
   here we look at the elements of $C(X_n)$ as functions defined on all of $X$ that vanish on $X\setminus X_n$ and similarly, the elements of
   $\C_n$ are considered
   as functions on $X$ that vanish off $X_n$. The restriction of the functions in $\C_{n+1} = \C_n\oplus \B_{n+1}$ to $X_n$ is the natural projection
   of $\C_{n+1}$ onto $\C_n$. Of course, it maps $C(X_{n+1})$ onto $C(X_n)$. Denote by $Y_n$
   the maximal ideal space of $\C_n$, $n\geq 1$ and set $Y_0 := \mset$; we shall identify sometimes $\C_n$ with $C(Y_n)$ and consider $Y_n$ as an open and
   closed subspace of $Y_{n+1}$. Due to the fact that $C(X_n)$ is a $C^*$-subalgebra of $C(Y_n)$ that contains the unit of $C(Y_n)$ there
   exists an obvious quotient map $q_n$ of $Y_n$ onto $X_n$; for $x,y\in Y_n$ we have $q_n(x) = q_n(y)$ if and only if $f(x) = f(y)$ for every $f\in C(X_n)$.
   Clearly, $q_{n+1}\mid Y_n = q_n$ and $q_{n+1}^{-1}(X_{n+1}\setminus X_n)\subseteq Y_{n+1}\setminus Y_n$ for every
   $n$. Set $Y := \cup_{n=1} Y_n$ and endow $Y$ with the weak topology determined by the subspaces $\{Y_n\setminus Y_{n-1}\}_{n=1}$. By defining
   $q(s) := q_n(s)$ if $s\in Y_n$ we get a well defined map of $Y$ onto $X$; it is a quotient map since a subset of $X$ is closed (open) if and
   only if its intersections with every $X_n$ are closed (respectively, relatively open).

   Let now $\{\B_{k,n}\}_{k=n}^{\infty}$ be an increasing sequence of finite dimensional $C^*$-subalgebras of $\B_n$ such that the unit of $\B_n$
   belongs to $\B_{n,n}$ and $\B_n = \overline{\cup_{k=n}
   \B_{k,n}}$. Set $\C_{k,n} := \oplus_{p=1}^k \B_{k,p}$ for $1\leq k\leq n$ and $\C_{k,n} := \oplus_{p=1}^n \B_{k,p}$ for $k > n$. Then
   $\{\C_{k,n}\}_{k=1}$  is an increasing sequence of finite dimensional $C^*$-algebras, the unit of $C_n$ belongs to $C_{n,n}$, and
   $\C_n = \overline{\cup_{k=1} \C_{k,n}}$. The minimal projections $e_k^j$ of $\B_{k,n}$, $k\geq n$, yield a decomposition of $Y_n\setminus Y_{n-1}$
   into mutually disjoint
   open and closed subsets $\{E^j_k\}$, $r(k,n-1) < j\leq r(k,n)$, where $\{r(k,p)\}_{p=1}^n$ is a strictly increasing sequence of natural
   numbers and $r(k,0) = 0$. The upper
   indexing of these subsets is done as now indicated. We index the sets $\{E^j_n\}$, $n\in \mathbb{N}$, in an arbitrary manner starting with $j = r(n,n-1) + 1$.
   For the $(k + 1)$-th
   collection, we begin by indexing all the subsets of $E^{r(k,n-1)+1}_k$ in an arbitrary order beginning with $j = r(k+1,n-1) + 1$ then all the subsets
   of $E^{r(k,n-1)+2}_k$ and so forth. Of course these subsets provide a Bratteli diagram for the commutative AF algebra $\C_n$. Its set of vertices
   is
   \begin{multline}
    \mathfrak{D}_n := \cup_{k=1}^n \{(k,j)\mid j = 1, \ldots r(k,1), r(k,1)+1, \ldots r(k,2), \ldots r(k,k)\}\\
                                           \bigcup \cup_{k=n+1}^{\infty} \{(k,j)\mid j=1, \ldots r(k,1), \ldots r(k,n)\}.
   \end{multline}
   The pair $(k,j)$, $r(k,p-1) < j\leq r(k,p)$, is linked by an edge to the pair $(k+1,h)$, $r(k+1,p-1) < h\leq r(k+1,p)$, if and only if $E_{k+1}^h$
   is a subset of $E_k^j$. There is a one to one correspondence between the points $y$ of $Y_n$ and complete connected sequences
   $\{(k,j_k(y))\}_{k=n}^{\infty}$ in $\mathfrak{D}_n$ given by $\{y\} = \cap_k E_k^{j_k(y)}$. Observe that the map $y\to \{(k,j_k(y))\}$ is the
   restriction at $Y_n$ of the analogous map from $Y_{n+1}$ to the set of complete connected sequences in $\mathfrak{D}_{n+1}$.

   Now we proceed to construct a Bratteli diagram of an AF algebra $\A$. It is based upon the set
   \[
    \mathfrak{D} := \cup_{k=1}^{\infty} \{(k,j)\mid j = 1, \ldots r(k,1), r(k,1)+1, \ldots r(k,2), \ldots r(k,k)\}.
   \]
   There is an obvious one to one correspondence between the row of order $k$ of $\mathfrak{D}$ and the decomposition of $Y_k$ into the subsets
   $\{E_k^j\}$, $0 < j\leq r(k,k)$. We shall arrange the pairs in every row from left to right in increasing order of the second coordinate.
   The pair $(k+1,h)$ is a descendant of $(k,j)$ if and only if $E_{k+1}^h\subset E_k^i$ for some $i\leq j$ and there are $y_1\in E_k^j$ and
   $y_2\in E_{k+1}^h$ such that $q(y_1) = q(y_2)$. In particular, this occurs if $E_{k+1}^h\subset E_k^j$ that is: all the edges from every
   $\mathfrak{D}_n$ are present in $\mathfrak{D}$ too. Each row of $\D$ is finite so every pair has only finitely many ancestors and only finitely many
   descendants in the following row. Observe that the points $(k,j)$ with $r(k,k-1)) < j\leq r(k,k)$, $k\geq 1$, and only these points, have no ancestor
   in $\D$. The set $\D$ with the edges described above is the diagram $\mathfrak{G}$ of an AF algebra which we denote $\A$. Associated with this diagram
   there is an increasing sequence of finite dimensional $C^*$-subalgebras $\{\A_k\}$ where $\A_k = \oplus_{j=1}^{r(k,k)} \A_{k,j}$, $k = 1,2,
   \ldots$. We denote the unit of $\A_{k,j}$ by $f_k^j$. Let $\phi_{\A} : \Pm(\A)\to \Gl(\A)$ be the
   complete regularization map of $\A$. Now we proceed to prove that $X$ is a homeomorphic to $\Gl(\A)$.

   For every natural number $n$ the subdiagram $\mathfrak{G}_n$ of $\mathfrak{G}$ whose set of pairs is $\mathfrak{D}_n$ defines, by
   \cite[Theorem 3.3]{Ind} and \cite[Proposition 2.14]{LT}, an ideal $\I_n$ of $\A$. From $\D_n\subset \D_{n+1}$ we get $\I_n\subset \I_{n+1}$;
   hence $\{\Pm(\I_n)\}_{n=1}$ is an increasing sequence of open subsets of $\Pm(\A)$. Moreover, $\Pm(\A) = \cup_{n=1} \Pm(\I_n)$. Indeed,
   if $\Pe\in \Pm(\A)$ then in
   $\D\setminus \D^{\Pe}$ there is a pair without ancestors. This pair has a descendant in the next row which is in $\D\setminus \D^{\Pe}$ too. By
   induction we shall find in $\D\setminus \D^{\Pe}$ a complete connected sequence whose first pair has no ancestors. If this pair is $(k,j)$
   with $r(k,k-1) < j\leq r(k,k)$ then all its descendants are in $\D_k$; thus we found that $\I_k$ is not contained in $\Pe$ thus $\Pe\in \Pm(\I_k)$.
   We claim that $\Pm(\I_n)$ is a compact subset of $\Pm(\A)$. To see this we examine the projection $f_n := \oplus_{j=1}^{r(n,n)} f_n^j$ that
   belongs to $\I_n$. If $\Pe\in \Pm(\I_n)$ then $\D_n$ is not contained in $\D^{\Pe}$; hence in $\D\setminus \D^{\Pe}$ there exists a connected
   sequence that is eventually in $\D_n$ and we get $\|f_n + \Pe\| = 1$. If $\Pe\notin \Pm(\I_n)$ then $\D_n\subset \D^{\Pe}$ and we get
   $\|f_n + \Pe\| = 0$. Thus $\Pm(\I_n) = \{\Pe\in \Pm(\A) \mid \|f_n + \Pe\|\geq 1\}$ and this set is compact by \cite[Proposition 3.3.7]{D}.
   We want to show now that the topology of $\Gl(\A)$ is determined by the sequence of compact sets $\{\phi_{\A}(\Pm(\I_n))\}_{n=1}$, i.e. a
   subset $U$ of $\Gl(\A)$ is open if (and only if) $U\cap \phi_{\A}(\Pm(\I_n))$ is relatively open in $\phi_{\A}(\Pm(\I_n))$ for every $n$.
   Thus suppose that $U$ satisfies this condition and set $V := \phi_{\A}^{-1}(U)$. Denote by $\phi_n$ the restriction of $\phi_{\A}$ to
   $\Pm(\I_n)$. Then $V\cap \Pm(\I_n) = \phi_n^{-1}(U\cap \phi_{\A}(\Pm(\I_n))$ is a relatively open subset of $\Pm(\I_n)$; hence open in
   $\Pm(\A)$. From $V = \cup_{n=1} (V\cap \Pm(\I_n))$ it follows that $V$ is open in $\Pm(\A)$ and we conclude that $U$ is open in $\Gl(\A)$.

   Now we shall define a continuous map $\varphi : Y\to \Pm(\A)$ as follows: for $y\in Y_n\setminus Y_{n-1}$, $\varphi(y)$ is the largest
   primitive ideal $\Pe$ of $\mathcal{A}$ such that $\D\setminus \D^{\Pe}$ contains the complete connected sequence
   $\{(k,j_k(y))\}_{k=n}^{\infty}$. In order
   to establish the continuity of $\varphi$ let $O$ be an open subset of $\Pm(\A)$ and $\I$ the ideal of $\A$ such that $\Pm(\I) = O$. Then
   \[
     \{y\in Y \mid \varphi(y)\in O\} = \{y\in Y \mid (\D\setminus \D^{\varphi(y)})\cap \D^{\I}\neq \mset\}.
   \]
   By the definition of $\varphi(y)$, we have $(\D\setminus \D^{\varphi(y)})\cap \D^{\I}\neq \mset$ if and only if there is a pair
   $(k_0,j_{k_0}(y))\in \D^{\I}$. Then $y\in E_{k_0}^{j_{k_0}(y)}\subset Y_n\setminus Y_{n-1}$ and if $z\in E_{k_0}^{j_{k_o}(y)}$ we have
   $(k_0,j_{k_0}(y)) = (k_0,j_{k_0}(z))$; hence $\varphi(z)$ does not contain the ideal $\I$. Thus
   $\varphi(E_{k_0}^{j_{k_0}(y)})\subset O$ and $\{y\in Y \mid \varphi(y)\in O\}$ is open. The map
   $\varphi$ drops to a continuous map $\psi : X\to \Gl(\A)$; that is, there is a continuous map $\psi$ from $X$ to $\Gl(\A)$
   such that
   \begin{equation} \label{E:1}
      \psi\circ q = \phi_{\A}\circ \varphi.
   \end{equation}
   Indeed, let $y_1,y_2\in Y_n\subset Y$ for some $n$ with $q(y_1) = q(y_2)$. We have
   $\{y_i\} = \cap_{k=n} E_k^{j_k(y_i)}$, $i = 1,2$. Suppose that $j_n(y_1)\leq j_n(y_2)$. Then, in $\D$, each pair $(k,j_k(y_2))$ is connected
   by an edge to the pair $(k+1,j_{k+1}(y_1))$, $k\geq n$. It follows that $\varphi(y_1)\subset \varphi(y_2)$. Thus $\phi_{\A}(\varphi(y_1)) =
   \phi_{\A}(\varphi(y_2))$ and the existence of $\psi$ is proven. The continuity of $\psi$ is an immediate consequence of the
   continuity of the other three maps involved in the equality (\ref{E:1}) and of the fact that $q$ is a quotient map. Clearly
   $\varphi(Y_n)\subset \Pm(\I_n)$ so $\psi(X_n)\subset \phi_{\A}(\Pm(\I_n))$. We intend to prove that $\psi$ is a homeomorphism of $X$ onto
   $\Gl(\A)$.

   The next step will be to show that $\psi(X_n) = \phi_{\A}(\Pm(\I_n))$ for every $n$. Of course, from this will follow that $\psi$ maps $X$
   onto $\Gl(\A)$. So let $\Pe\in \Pm(\I_n)$. There is a complete connected
   sequence $\{(k,j_k)\}_{k=n}^{\infty}$ in $\D\setminus \D^{\Pe}$ with $1\leq j_n\leq r(n,n)$. There exists $s_n$, $1\leq s_n\leq j_n$, such
   that $E_{n+1}^{j_{n+1}}\subset E_n^{s_n}$. Similarly, there exists $s_{n+1}$, $1\leq s_{n+1}\leq s_n$, such that $E_{n+2}^{j_{n+2}}\subset
   E_n^{s_{n+1}}$. Continuing in this way we get a non increasing sequence of natural numbers $\{s_m\}_{m=n}$ that eventually becomes
   constant. Thus we infer that there is $t_n$, $1\leq t_n\leq j_n$, such that all but finitely many of the sets $\{E_k^{j_k}\}_{k=n}$ are contained
   in $E_n^{t_n}$. Arguing with the subsets of $E_n^{t_n}$ corresponding to pairs on the $n+1$ row of $\D$ we find that there is $t_{n+1}$,
   $1\leq t_{n+1}\leq r(n+1,t_n)$ such that $E_{n+1}^{t_{n+1}}$ contains all but finitely many of the sets $\{E_k^{j_k}\}_{k=n}$. Inductively
   one finds a sequence $\{t_l\}_{l=n}$ such that $\{E_l^{t_l}\}_{l=n}$ is decreasing and each $E_l^{t_l}$ contains all but finitely many of
   the sets $\{E_k^{j_k}\}_{k=n}$. Obviously the complete connected sequence $\{(l,t_l)\}_{l=n}^{\infty}$ is contained in $\D\setminus
   \D^{\Pe}$. If we denote by $y$ the unique point in $\cap_{l=n} E_l^{t_l}$ then $y\in Y_n$ and $\varphi(y)\supset \Pe$. It follows that
   $q(y)\in X_n$ and $\psi(q(y)) = \phi_{\A}(\varphi(y)) = \phi_{\A}(\Pe)$.

   Keeping the above notation, we remark that if we choose $y_k\in E_k^{j_k}$, $k = n,\newline n+1,\ldots$, then the sequence $\{y_k\}$ converges to $y$.
   For future use we note that given a primitive ideal $\Pe$ of $\A$
   there is a complete connected sequence $\{(k,t_k)\}_{k=n}^{\infty}$ in $\D\setminus \D^{\Pe}$ such that $\{E_k^{t_k}\}_{k=n}^{\infty}$ is a
   decreasing sequence. Moreover, we claim that if $\{(k,t_k)\}_{k=n}^{\infty}$ and $\{(k,s_k)\}_{k=m}^{\infty}$ are two such sequences in $\D\setminus
   \D^{\Pe}$ with $\{y\} = \cap_{k=n}^{\infty} E_k^{t_k}$ and $\{z\} = \cap_{k=m}^{\infty} E_k^{s_k}$ then $q(y) = q(z)$. Indeed, suppose $m\leq
   n$. Since $\Pe$ is a primitive ideal one can find inductively a connected sequence $\{(h_k,i_k)\}_{k=n}$ in $\D\setminus \D^{\Pe}$ such that
   $(h_k,i_k)$ is a descendant of $(k,t_k)$ and $(k,s_k)$, $k\geq n$. Thus there exist $u_k\in E_k^{t_k}$, $v_k\in E_k^{s_k}$, and
   $\{u_k^{\pr},v_k^{\pr}\}\subset E_{h_k}^{i_k}$, $k\geq n$, such that $q(u_k) = q(u_k^{\pr})$ and $q(v_k) = q(v_k^{\pr})$.
   The sequence $\{u_k\}$ converges to $y$, $\{v_k\}$ converges to $z$ while by the above argument $\{u_k^{\pr}\}$ and $\{v_k^{\pr}\}$ both
   converge to some $w\in Y$. We infer that $q(y) = q(w) = q(z)$.

   Now we shall prove that $\psi$ is one to one and this will be a consequence of the following claim: given $g\in C^b(X)$ there exists $\tilde{g}\in C^b(\Pm(\A))$
   such that
   \begin{equation} \label{E:2}
      \tilde{g}\circ \varphi = g\circ q.
   \end{equation}
   Indeed, assuming this claim to be valid, let $x_1,x_2\in X$ with $x_1\neq x_2$ and choose
   $g\in C^b(X)$ such that $g(x_1)\neq g(x_2)$. Choose also $y_1,y_2\in Y$ such that $q(y_1) = x_1$, $q(y_2) = x_2$. Then
   $\tilde{g}(\varphi(y_1)) = g(x_1)\neq g(x_2) = \tilde{g}(\varphi(y_2))$, hence $\phi_{\A}(\varphi(y_1))\neq \phi_{\A}(\varphi(y_2))$. From
   (\ref{E:1}) we obtain $\psi(x_1) = \psi(q(y_1))\neq \psi(q(y_2)) = \psi(x_2)$. We proceed now to prove the stated claim so we let $g\in
   C^b(X)$. For $\Pe\in \Pm(\A)$ choose a complete connected sequence $\{(k,l_k)\}_{k=n}^{\infty}$ in $\D\setminus \D^{\Pe}$ such that
   $\{E_k^{\l_k}\}$ is a decreasing sequence. With $\{y\} = \cap_{k=n} E_k^{l_k}$ set $\tilde{g}(\Pe) := g(q(y))$. We infer from the remarks above
   that $\tilde{g}$ is well defined. Obviously $\tilde{g}$ is bounded and satisfies (\ref{E:2}). It remains to prove the continuity of
   $\tilde{g}$.

   For $g\in C^b(X)$ we choose a sequence $\{g_k\}_{k=1}^{\infty}$, $g_k =\Sigma_{j=1}^{r(k,k)} \alpha_k^je_k^j$, that converges uniformly on each $Y_n$ to
   $(g\circ q)|_{Y_n}$ and such that $|\alpha_k^j|\leq \|g\|$, $k = 1,2,\ldots $, $1\leq j\leq r(k,k)$. Denote $\eta_k^n := \sup\{y\in Y_n
   \mid |g(q(y)) - g_k(y)|\}$. We consider now the sequence $a_k := \Sigma_{j=1}^{r(k,k)} \alpha_k^jf_k^j\in \A_k$, $k= 1,2,\ldots$. We shall
   show that $\{a_k\}$ converges strictly to an element of the center of the multiplier algebra of $\A$. To this end let $a\in \A_{m,i}$, $1\leq
   i\leq r(m,m)$, and suppose $m<k<l$. Then
   \begin{equation} \label{E:3}
      \|a_ka - a_la\| = \|aa_k - aa_l\| = \|(\sum_j\pr \alpha_k^jf_k^jf_m^i - \sum_j\pr \pr\alpha_l^jf_l^jf_m^i)a\|
   \end{equation}
   where $\sum\pr$ is taken for those indices $j$ such that $f_k^jf_m^i\neq 0$ and $\sum\pr \pr$ is taken for those indices $j$ such that
   $f_l^jf_m^i\neq 0$. Now remark that $f_k^jf_m^i = f_m^if_k^jf_m^i$ and $f_l^jf_m^i = f_m^if_l^jf_m^i$ since $m<k<l$; hence the two sums in (\ref{E:3}) are in
   the unital hereditary $C^*$-algebra $f_m^i\mathcal{A}f_m^i$. They
   actually belong to the centers of $f_m^i\A_kf_m^i$ and $f_m^i\A_lf_m^i$ respectively so we can apply Lemma \ref{L:norm} to estimate the norm
   in (\ref{E:3}). Also remark that if $f_k^{j\pr}f_m^i\neq 0\neq f_l^{j\pr \pr}f_m^i$ and $(l,j\pr \pr)$ is a descendant of
   $(k,j\pr)$ then there are $y\pr\in E_k^{j\pr}$ and $y\pr \pr\in E_l^{j\pr \pr}$ such that $q(y\pr) = q(y\pr \pr)$. Thus
   \begin{equation} \label{E:4}
      |\alpha_k^{j\pr} - \alpha_l^{j\pr \pr}|\leq |\alpha_k^{j\pr} - g(q(y\pr))| + |g(q(y\pr \pr)) - \alpha_l^{j\pr \pr}|\leq
      \eta_k^m + \eta_l^m.
   \end{equation}
   From (\ref{E:3}) and (\ref{E:4}) we obtain
   \begin{equation} \label{E:5}
      \|a_ka - a_la\| = \|aa_k - aa_l\| = \max\{|\alpha_k^{j\pr} - \alpha_l^{j\pr \pr}| \mid f_k^{j\pr}f_l^{j\pr \pr}f_m^i\neq
      0\}\leq \eta_k^m + \eta_l^m.
   \end{equation}
   Thus $\{a_ka\}$ and $\{aa_k\}$ are Cauchy sequences. Of course, this is true if $a$ is any element of $\A_m$. Since the sequence $\{a_k\}$ is
   bounded we can conclude that $\{a_ka\}$ and $\{aa_k\}$ are Cauchy sequences for any $a\in \A$; thus $\{a_k\}$ strictly converges to some
   multiplier $b$. If $a\in \A_m$ and $k > m$ then $aa_k = a_ka$, hence $ab = ba$. It follows that $b$ is in the center of the multiplier
   algebra; hence there is $h\in C^b(\Pm(\A))$ such that $ab + \Pe = h(\Pe)(a + \Pe)$ for every $a\in \A$ and $\Pe\in \Pm(\A)$. We are going to
   show that $h = \tilde{g}$.

   Let $\Pe$ be a primitive ideal of $\A$, say $\Pe\in \Pm(\I_n)$ for some $n$, and suppose $(m,i)\in \D\setminus \D^{\Pe}$ is the leftmost pair of $\D\setminus \D^{\Pe}$ in the
   $m$-th row. Then in every subsequent row of $\D\setminus \D^{\Pe}$ the descendants of $(m,i)$ are situated to the left of every other pair.
   Set $i_m := i$ and for every $k > m$ let $(k,i_k)$ be the leftmost pair of the $k$-th row in $\D\setminus \D^{\Pe}$. The sequence
   $\{E_k^{i_k}\}_{k=m}^{\infty}$ is decreasing and we denote by $y\in Y_n$ the unique point in all these sets. Let $\varepsilon > 0$ be arbitrary and suppose $s\geq m$ is such
   that $|g(q(z)) - g(q(y))| < \varepsilon$ for every $z\in E_s^{i_s}$. An easy induction shows that if $(k,j)\in \D\setminus \D^{\Pe}$, $k\geq
   s$, and $f_k^jf_s^{i_s}\neq 0$ then $E_k^j\subseteq E_s^{i_s}$. Let $k\geq s$ be sufficiently large so $\eta_k^n < \varepsilon$ and $\|bf_s^{i_s} - a_kf_s^{i_s}\| < \varepsilon$.
   Then for $z\in E_k^j\subset Y_n$ we have
   \begin{equation} \label{E:6}
      |g(q(z)) - \alpha_k^j|\leq \eta_k^n < \varepsilon.
   \end{equation}
   Thus, if $(k,j)\in \D\setminus \D^{\Pe}$ and $f_k^jf_s^{i_s}\neq 0$, by taking $z\in E_k^j$ we get
   \begin{equation} \label{E:7}
      |g(q(y)) - \alpha_k^j|\leq |g(q(y)) - g(q(z))| + |g(q(z)) - \alpha_k^j| < 2\varepsilon.
   \end{equation}
   We have
   \begin{multline} \label{E:8}
      \|h(\Pe)(f_s^{i_s} + \Pe) - \tilde{g}(\Pe)(f_s^{i_s} + \Pe)\| = \|(bf_s^{i_s} + \Pe) - g(q(y))(f_s^{i_s} + \Pe)\|\leq \\
          \|(bf_s^{i_s}+\Pe) - \sum\pr \alpha_k^j(f_k^jf_s^{i_s}+\Pe)\| + \|\sum\pr \alpha_k^j(f_k^jf_s^{i_s}+\Pe) - g(q((y))(f_s^{i_s}+\Pe)\|
   \end{multline}
   where the sum is taken for those indices $j$, $1\leq j\leq r(k,k)$, such that $(k,j)\in \D\setminus \D^{\Pe}$ and $f_k^jf_s^{i_s}\neq 0$. Now
   \begin{multline} \label{E:9}
      \|(bf_s^{i_s}+\Pe) - \sum\pr \alpha_k^j(f_k^jf_s^{i_s} + \Pe)\| = \|(bf_s^{i_s}+\Pe) - \sum_{j=1}^{r(k,k)}
      \alpha_k^j(f_k^jf_s^{i_s}+\Pe)\|\leq \\
      \|bf_s^{i_s} - a_kf_s^{i_s}\| < \varepsilon,
   \end{multline}
   \begin{equation} \label{E:10}
      f_s^{i_s} + \Pe = \sum\pr f_k^jf_S^{i_s} + \Pe,
   \end{equation}
   and
   \begin{multline} \label{E:11}
      \|\sum\pr \alpha_k^j(f_k^jf_s^{i_s} + \Pe) - g(q(y))(f_s^{i_s} + \Pe)\| = \|\sum\pr (\alpha_k^j - g(q(y)))(f_k^jf_s^{i_s} + \Pe)\| = \\
      max|\alpha_k^j - g(q(y))| < 2\varepsilon
   \end{multline}
   by (\ref{E:10}) and (\ref{E:7}). From (\ref{E:8}), (\ref{E:9}), and (\ref{E:11}) we obtain
   \[
    \|h(\Pe)(f_s^{i_s} + \Pe) - \tilde{g}(\Pe)(f_s^{i_s} + \Pe)\| < \varepsilon + 2\varepsilon = 3\varepsilon.
   \]
   Since $\|f_s^{i_s} + \Pe\| = 1$ we finally get $h(\Pe) = \tilde{g}(\Pe)$ and the continuity of $\tilde{g}$ is proved.

   From all of the above it follows that $\psi$ is a homeomorphism of $X_n$ onto $\phi_{\A}(\Pm(\I_n))$ for each $n$. The sequence $\{X_n\}$
   determines the topology of $X$ while the sequence $\{\phi_{\A}(\Pm(\I_n))\}$ determines the topology of $\Gl(\A)$ so we conclude that indeed
   $\psi$ is a homeomorphism of $X$ onto $\Gl(\A)$.

\end{proof}

\section{Points of local compactness} \label{S:Points}

We now investigate conditions that ensure the existence of a compact neighbourhood for a point in the Glimm space of a separable $C^*$-algebra.
We also discuss below the presence of the Baire property in the Glimm space of a separable $C^*$-algebra.

A map $q: X\to Y$ is called biquotient at $y\in Y$ if every open cover of $q^{-1}(y)$ has a finite subfamily $\mathcal{V}$ such that $y\in
\mathrm{Int} (\cup \{q(U) \mid U\in \mathcal{V}\})$.

$(i) \Rightarrow (ii)$ and $ (iii) \Rightarrow (i)$ below are from \cite{AP}. The first is problem nr. 18, chapter VI, the second is problem nr.
21, chapter VI.

\begin{thm} \label{T:Top}

   Let $X$ be a second countable locally compact Hausdorff space and $Y$ a Hausdorff quotient of it with $q$ the quotient map. The following are
   equivalent for $y\in Y$:
   \begin{itemize}
      \item[(i)] the quotient map $q$ is biquotient at $y$;
      \item[(ii)] $y$ has a compact neighbourhood;
      \item[(iii)] $y$ has a countable basis of neighbourhoods.
   \end{itemize}

\end{thm}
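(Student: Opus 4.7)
I would organize the proof as the cycle $(i)\Rightarrow(ii)\Rightarrow(iii)\Rightarrow(i)$ and concentrate on the new implication $(ii)\Rightarrow(iii)$, since $(i)\Rightarrow(ii)$ and $(iii)\Rightarrow(i)$ are the problems from \cite{AP} already flagged by the authors.

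For $(i)\Rightarrow(ii)$: using local compactness of $X$, I cover $q^{-1}(y)$ by open sets with compact closure and apply biquotient-at-$y$ to extract a finite subfamily $\mathcal{V}$ with $y\in\mathrm{Int}\bigcup_{U\in\mathcal{V}}q(U)$. Then $\bigcup_{U\in\mathcal{V}}q(\overline{U})$ is a finite union of compact subsets of the Hausdorff space $Y$, hence compact, and it contains the open neighborhood $\mathrm{Int}\bigcup_{U\in\mathcal{V}}q(U)$ of $y$, giving a compact neighborhood of $y$.

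For the key implication $(ii)\Rightarrow(iii)$: fix a compact neighborhood $K$ of $y$. Since $Y$ is Hausdorff, $K$ is closed, so the standard fact that the restriction of a quotient map to the preimage of a closed set is again a quotient map gives that $q\colon q^{-1}(K)\to K$ is a quotient map. The preimage $q^{-1}(K)$ is closed in the second countable space $X$, hence has a countable base $\{B_n\}$, and I claim $\{q(B_n)\}$ is a countable network for $K$: given any open $U\subseteq K$ and any $y'\in U$, I pick $x'\in q^{-1}(y')$ and $B_n$ with $x'\in B_n\subseteq q^{-1}(U)$, which yields $y'\in q(B_n)\subseteq U$. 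By the classical theorem that a compact Hausdorff space with a countable network is metrizable, $K$ is metrizable, so $y$ has a countable neighborhood basis in $K$; since $K$ contains an open $Y$-neighborhood of $y$, that basis (after trimming) remains a countable neighborhood basis at $y$ in $Y$.

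For $(iii)\Rightarrow(i)$ I would argue by contrapositive: if $\mathcal{U}$ witnesses the failure of biquotient-at-$y$, then by Lindel\"ofness of $q^{-1}(y)$ as a subspace of the second countable space $X$ I may take $\mathcal{U}=\{U_n\}$ countable; setting $W_n=U_1\cup\cdots\cup U_n$, a decreasing countable basis $\{V_n\}$ at $y$ yields $y_n\in V_n\setminus q(W_n)$, and a standard local-compactness argument extracts lifts $x_n\in q^{-1}(y_n)$ with a cluster point in $q^{-1}(y)$ that must lie in some $U_k$, contradicting $y_n\notin q(W_n)$ for $n\geq k$. The one genuine obstacle is $(ii)\Rightarrow(iii)$, whose only nontrivial input is the invoked metrization theorem for compact Hausdorff spaces with a countable network; everything else is routine topological bookkeeping.
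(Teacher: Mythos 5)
Your implications $(i)\Rightarrow(ii)$ and $(ii)\Rightarrow(iii)$ are both sound. The first coincides with the paper's argument. The second takes a genuinely different route: the paper extends a countable point-separating family from each $q(X_n)$ to all of $Y$ (using separability of $C(X_n)$, the isometry $h\mapsto h\circ q$, and normality of $Y$) and embeds the compact neighbourhood into $[0,1]^{\aleph_0}$, whereas you note that the compact neighbourhood $K$ is a continuous image of the second countable space $q^{-1}(K)$, hence has a countable network, and invoke the classical theorem that a compact Hausdorff space with a countable network is metrizable. Both reach the same conclusion that $K$ is metrizable; yours is shorter but leans on the network metrization theorem, while the paper's is self-contained modulo Tietze extension and Morita's paracompactness result. (For your network argument you only need $q\colon q^{-1}(K)\to K$ to be continuous and onto, so the quotient-restriction fact is not actually required.)

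The gap is in $(iii)\Rightarrow(i)$. Having chosen $y_n\in V_n\setminus q(W_n)$, you assert that ``a standard local-compactness argument extracts lifts $x_n\in q^{-1}(y_n)$ with a cluster point in $q^{-1}(y)$.'' Local compactness of $X$ gives you nothing here: arbitrary lifts of the $y_n$ need not stay in any compact subset of $X$, and local compactness alone provides no mechanism forcing some choice of lifts to cluster over $y$. The step that actually produces the cluster point is the quotient property. Set $Q=\{y_n \mid n\in\mathbb{N}\}$; one checks $\overline{Q}=Q\cup\{y\}$ with $y\notin Q$, so $Q$ is not closed, and therefore $P=q^{-1}(Q)$ is not closed in $X$ --- this is precisely where $q$ being a quotient map enters. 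Any $x\in\overline{P}\setminus P$ then satisfies $q(x)\in\overline{Q}\setminus Q=\{y\}$, i.e.\ $x\in q^{-1}(y)$; and since each finite union $\bigcup_{n<m}q^{-1}(y_n)$ is closed, $x$ lies in the closure of $\bigcup_{n\geq m}q^{-1}(y_n)$ for every $m$, which is what lets one find $m_1\geq m$ with $U_m\cap q^{-1}(y_{m_1})\neq\emptyset$ and derive the contradiction with $y_{m_1}\notin q(W_{m_1})$. As written, your sketch omits exactly this step, and it is the heart of the implication.
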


\begin{proof}

   By \cite[Theorem 1]{M} $Y$ is a paracompact space.
   $(i)\Rightarrow (ii)$. For each $x\in q^{-1}(y)$ we choose a compact neighbourhood $C_x$ of $x$ in $X$ and let $U_x$ be its interior. Then
   $\{U_x \mid x\in q^{-1}(y)\}$ is an open cover of $q^{-1}(y)$ so there are $\{x_i\}_{i=1}^k\subseteq q^{-1}(y)$ such that $y\in \mathrm{Int}
   \cup_{i=1}^k q(U_{x_i})$. The compact set $\cup_{i=1}^k q(C_{x_i})\supseteq \cup_{i=1}^k q(U_{x_i})$ is a neighbourhood of $y$.

   $(ii)\Rightarrow (iii)$. Let $V$ be a compact neighbourhood of $y$. There exists an increasing sequence $\{X_n\}$ of compact subsets of $X$
   such that $X = \cup_n X_n$. For each $n$ the set $q(X_n)$ is compact and the map $h\to h\circ q$, $h\in C(q(X_n))$, is a linear isometry of
   $C(q(X_n))$ into the separable $C^*$-algebra $C(X_n)$. Thus, by the normality of Y, there are continuous functions
   $\{f_n^m \mid 1\leq m < \infty\}$ of $Y$ into $[0,1]$ that separate the points of $q(X_n)$. The double sequence $\{f_n^m\mid_V \mid 1\leq m,n
   < \infty\}$ yields a one-to-one mapping of $V$ into $[0,1]^{\aleph_0}$ that is a homeomorphism onto its image by the compactness of $V$. It follows that
   the compact neighbourhood $V$ of $y$ is second countable.

   $(iii)\Rightarrow (i)$. Consider an open cover $\mathcal{U}$ of the closed subset $q^{-1}(y)$ of $X$. Since $X$ as well as $q^{-1}(y)$ are Lindel\"{o}f there is
   no loss of generality in supposing that $\mathcal{U}$ is countable, say $\mathcal{U} = \{U_n\}_{n=1}^{\infty}$. We may also suppose
   $U_n\subseteq U_{n+1}$ and $U_n\cap q^{-1}(y)\neq \mset$ for every $n$. We fix a neighbourhood basis $\{V_i\}_{i=1}^{\infty}$ of $y$ with
   $V_{i+1}\subseteq V_i$ for $i\in \mathbb{N}$.

   We claim that there is $n_0\in \mathbb{N}$ such that $q(U_{n_0})\supseteq V_{n_0}$; from this will follow that $q$ is biquotient at $y$.
   Assuming the claim to be false, there exists $y_n\in V_n\setminus q(U_n)$ for every $n$. The set $Q := \{y_n \mid n\in \mathbb{N} \}$
   satisfies $\overline{Q} = \{y\}\cup Q$ and $y\notin Q$. The subset $P := q^{-1}(Q)$ of $X$ is not closed since $Q$ is not closed in $Y$. Pick
   $x\in \overline{P}\setminus P$; then $q(x)\notin Q$ but $q(x)\in \overline{q(P)} = \overline{Q}$. Thus $x\in q^{-1}(y)$ and there exists $m\in
   \mathbb{N}$ with $x\in U_m$. Set $P_m := \cup \{q^{-1}(y_n) \mid n\geq m\}$. From $x\in \overline{P}\setminus P$ we get $x\in
   \overline{P_m}$; hence $U_m\cap P_m\neq \mset$. Thus there exists $m_1\geq m$ such that $U_m\cap q^{-1}(y_{m_1})\neq \mset$. But then
   $U_{m_1}\cap q^{-1}(y_{m_1})\neq \mset$ too. This contradicts the fact that $y_n\notin q(U_n)$, $n\in \mathbb{N}$ and the above claim is established.

\end{proof}

\begin{rems}

   \begin{enumerate}

      \item The implication $(i) \Rightarrow (ii)$ from Theorem \ref{T:Top} is valid for every locally compact Hausdorff space $X$. The implication $(iii)
      \Rightarrow (i)$ requires $X$ to be only a Lindel\"{o}f locally compact Hausdorff space.
      \item The space of the rational numbers $\mathbf{Q}$ is the image of a countable
      discrete space by a continuous one to one function. It is also the quotient of a locally compact Hausdorff space, being a first countable space,
      hence a k-space.
      On the other hand, the above theorem tells us that $\mathbf{Q}$ cannot be the quotient of a second countable locally compact space: it
      satisfies the condition (iii) but not the condition (ii).

   \end{enumerate}

\end{rems}

Let $X$ and $Y$ be as in Theorem \ref{T:Top} and denote by $S$ the (possibly empty) set of all the points $y\in Y$ that have the properties
mentioned in the statement. Then $S$ is open and locally compact in the relative topology.

\begin{prop}

   Let $X$, $Y$, and $S$ be as above. Then $Y$ is a Baire space if and only if $S$ is dense.

\end{prop}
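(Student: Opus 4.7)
The plan is to prove the two directions separately, using the fact (noted just before the statement) that $S$ is open in $Y$ and locally compact Hausdorff in the relative topology, so in particular $S$ is itself a Baire space.

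For the implication ``$S$ dense $\Rightarrow Y$ Baire'', I would take an arbitrary sequence $\{G_n\}$ of dense open subsets of $Y$ and observe that each $G_n\cap S$ is open in $S$. Since $S$ is open in $Y$ and $G_n$ is dense in $Y$, for every non-empty relatively open $V\subseteq S$ the intersection $V\cap G_n$ is non-empty, so $G_n\cap S$ is dense in $S$. Applying the Baire property of $S$, the set $\bigcap_n(G_n\cap S)$ is dense in $S$, and because $S$ is dense in $Y$, it is dense in $Y$. Hence $\bigcap_n G_n$ is dense in $Y$, which is the Baire property for $Y$.

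For the converse I would argue by contradiction. Suppose $Y$ is Baire but $S$ is not dense. Then $U:=Y\setminus\overline{S}$ is a non-empty open subset of $Y$, and as an open subspace of a Baire space it is Baire. Since $X$ is second countable and locally compact Hausdorff it is $\sigma$-compact, so write $X=\bigcup_n X_n$ with $X_n$ compact and increasing. Each $q(X_n)$ is compact, hence closed in the Hausdorff space $Y$, and $U=\bigcup_n\bigl(U\cap q(X_n)\bigr)$ is a countable cover of $U$ by relatively closed subsets. By the Baire property of $U$, some $U\cap q(X_{n_0})$ has non-empty interior in $U$, so there is a non-empty open set $W\subseteq U$ with $W\subseteq q(X_{n_0})$. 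But then $q(X_{n_0})$ is a compact neighbourhood of every point of $W$, which forces $W\subseteq S$, contradicting $W\subseteq U\subseteq Y\setminus\overline{S}$.

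I do not anticipate a serious obstacle: both ingredients invoked (locally compact Hausdorff spaces are Baire, open subspaces of Baire spaces are Baire) are standard. The one step that needs a moment's care is the converse, where the $\sigma$-compactness of $X$ must be combined with the Baire category theorem \emph{inside} the open set $U$ in order to extract an open subset lying in a single $q(X_{n_0})$; it is precisely this compact set that supplies the neighbourhoods forcing $W\subseteq S$ and producing the contradiction.
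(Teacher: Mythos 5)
Your proof is correct and follows essentially the same route as the paper's: the forward direction uses that the open, locally compact set $S$ is Baire and dense, and the converse applies the Baire property of an open subset of $Y$ to a countable cover by compact images $q(K_n)$, extracting an open set with a compact neighbourhood and hence contained in $S$. The only cosmetic differences are that you argue by contradiction on $U=Y\setminus\overline{S}$ and use a global compact exhaustion of $X$, whereas the paper shows directly that $S$ meets every non-empty open $G$ using an exhaustion of $q^{-1}(G)$.
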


\begin{proof}

   If $S$ is dense in $Y$ then clearly $Y$ is a Baire  space. Suppose now that $Y$ is a Baire space and let $\mset\neq G\subseteq Y$ be open. With $q :
   X\to Y$ being the quotient map we have that the subset $q^{-1}(G)$ of $X$ is open, thus $\sigma$-compact in $X$. Let $\{K_n\}$ be an
   increasing sequence of compact sets such that $q^{-1}(G) = \cup K_n$. Then $G = \cup q(K_n)$ and each $q(K_n)$ is compact, hence closed in $Y$.
   The open set $G$ is a Baire space too so there exists $n_0$ such that $\mathrm{Int} q(K_{n_0})\neq \mset$. The open set $\mathrm{Int} q(K_{n_0})$ is locally compact in
   its relative topology therefore it is contained in $S$. We obtained $S\cap G\neq \mset$.

\end{proof}

A separable $C^*$-algebra whose Glimm space is a non locally compact Baire space is constructed in \cite[III.9.2]{DH}. At the other extreme,
there exists a separable (even AF) $C^*$-algebra whose Glimm space contains no point that has a compact neighbourhood. Indeed, the space
constructed by Arhangel'ski\u{i} and Franklin \cite{AF} that was mentioned in the Introduction is nowhere locally compact (hence nowhere first
countable by Theorem \ref{T:Top}) but is the Hausdorff quotient of a countable disjoint sum of converging sequences and their limits and hence
is the Glimm space of an AF algebra by Theorem \ref{T:Eq}. However, this space like any Hausdorff quotient of a second countable locally compact
Hausdorff space has the property that each of its points is a $G_{\delta}$ point. Indeed, if $Y$ is such a quotient of $X$ by the map $q$ and
$y\in Y$ then $Z := q^{-1}(y)$ is closed so the open set $X\setminus Z$ is $\sigma$-compact. Thus $Y\setminus \{y\} = q(X\setminus Z)$ is an
$F_{\sigma}$ set and $\{y\}$ is a $G_{\delta}$ set.

\bibliographystyle{amsplain}
\bibliography{}

% -----------------------------------
\end{document}